\theoremstyle{definition}
\newtheorem{thm}{Theorem}
\newtheorem{rmk}[thm]{Remark}
\titleformat{\section}{\Large\bfseries}{\thesection}{.5em}{}
\def\R{\mathbb{R}}
\def\p{\partial}
\def\ud{\mathrm{d}}
\title{\textbf{An Estimate of the First Eigenvalue of a Schr\"odinger Operator on Closed Surfaces}}
\author{Teng Fei and Zhijie Huang}
\date{}
\begin{document}
\maketitle{}

Let $\Sigma$ be a closed surface equipped with an arbitrary Riemannian metric $g$ and let $\Delta$ be the associated Laplace-Beltrami operator on $\Sigma$. In this short note, we establish an estimate for the first eigenvalue of the Schr\"odinger operator $-\Delta+2\kappa$ on $\Sigma$, where $\kappa$ is the Gauss curvature of the given metric.  This operator appears as the stability operator (Jacobi operator) of minimal surfaces in $\R^3$ or the flat $3$-torus $T^3$. Our method is based on the work of Schoen-Yau \cite{schoen1983}.
\begin{thm}~\\
Let $\lambda_1$ be the first eigenvalue of the operator $-\Delta+2\kappa$ on $\Sigma$ and let $D$ be the diameter of $\Sigma$. For any parameter $0<\mu<2$, we have the following estimate:
\begin{equation}\label{esti}
\lambda_1\leq\max\left(\frac{2\mu-1}{\mu}\kappa\right)+\frac{4-\mu}{\mu(4-2\mu)}\frac{\pi^2}{D^2}.
\end{equation}
In particular, by setting $\mu=1/2$, we get an upper bound depending only on $D$:
\begin{equation}
\label{maines}\lambda_1\leq\frac{7\pi^2}{3D^2}.
\end{equation}
\end{thm}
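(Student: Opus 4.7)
I would follow the Schoen--Yau strategy \cite{schoen1983}, exploiting a positive first eigenfunction combined with a distance-based cutoff.

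First, let $u>0$ be a first eigenfunction of $L=-\Delta+2\kappa$, so $-\Delta u+2\kappa u=\lambda_1 u$. Setting $v=u^\mu$ gives, by direct computation,
\[
\Delta v=\mu(2\kappa-\lambda_1)v+\frac{\mu-1}{\mu}\frac{|\nabla v|^2}{v}.
\]
Multiplying by $\phi^2 v$ for a Lipschitz test function $\phi$ and integrating by parts produces what I will call the \emph{master identity}:
\[
\mu\lambda_1\int_\Sigma\phi^2 v^2\,dA=2\mu\int_\Sigma\kappa\,\phi^2 v^2\,dA+\frac{2\mu-1}{\mu}\int_\Sigma\phi^2|\nabla v|^2\,dA+2\int_\Sigma\phi v\,\nabla\phi\cdot\nabla v\,dA.
\]

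Next I would choose $\phi(x)=\psi(d(x,p))$, where $p\in\Sigma$ is an endpoint of a length-realizing geodesic (so that $r(x):=d(x,p)$ satisfies $\max_\Sigma r=D$) and $\psi:[0,D]\to\R$ is a one-dimensional function to be optimized. Since $|\nabla r|=1$ almost everywhere, $|\nabla\phi|^2=\psi'(r)^2$ a.e., so the $\phi$-gradient integrals reduce to a 1-D Rayleigh quotient on $[0,D]$; the Dirichlet eigenvalue $\pi^2/D^2$ of $-d^2/ds^2$ on $[0,D]$, realized by $\psi(s)=\sin(\pi s/D)$, is the source of the diameter factor.

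The key step is to handle the cross term in the master identity via Cauchy--Schwarz with a parameter $\alpha>0$,
\[
2\phi v\,\nabla\phi\cdot\nabla v\leq\alpha\,\phi^2|\nabla v|^2+\alpha^{-1}v^2|\nabla\phi|^2,
\]
and then to eliminate the remaining $\int\phi^2|\nabla v|^2$ term using the master identity specialized to $\phi\equiv 1$,
\[
\int_\Sigma|\nabla v|^2\,dA=\frac{\mu^2}{2\mu-1}\left(\lambda_1\int_\Sigma v^2\,dA-2\int_\Sigma\kappa v^2\,dA\right),
\]
together with the bound $\int\phi^2|\nabla v|^2\leq(\max_\Sigma\phi^2)\int|\nabla v|^2$. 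In parallel, I would split $2\mu\int\kappa\phi^2 v^2=(2\mu-1)\int\kappa\phi^2 v^2+\int\kappa\phi^2 v^2$ and apply the pointwise bound $\kappa\leq\max\kappa$ only to the first piece, which should give the coefficient $(2\mu-1)/\mu$ in front of $\max\kappa$ rather than the naive $2$. The partial-fraction decomposition $\tfrac{4-\mu}{\mu(4-2\mu)}=\tfrac{1}{\mu}+\tfrac{1}{2(2-\mu)}$ of the diameter coefficient suggests that the two pieces arise separately from the 1-D Rayleigh quotient and from the Cauchy--Schwarz parameter $\alpha$; optimal tuning of $\alpha$ should then deliver the stated constant, and specialization to $\mu=1/2$ will yield \eqref{maines}.

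The hardest part will be this balancing step: simultaneously tuning $\alpha$ and $\psi$ so that the coefficients $(2\mu-1)/\mu$ and $\tfrac{4-\mu}{\mu(4-2\mu)}$ both come out exactly. A secondary technical issue is that $r(x)$ is only Lipschitz on $\Sigma$ (with possible cut-locus singularities), so the integrations by parts above must be justified by a standard approximation argument.
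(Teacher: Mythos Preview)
Your plan diverges from the paper's argument in an essential way and, as written, does not close. The paper never integrates over $\Sigma$; it reduces to a \emph{one-dimensional} problem along a curve. One fixes two points at distance $D$ and lets $\gamma$ minimize the weighted length $\int_\gamma q^\mu$ (equivalently, a geodesic for the conformal metric $q^{2\mu}g$). The first variation gives $v_n=-\tau v$, tying the normal derivative of $v=q^\mu$ to the geodesic curvature $\tau$ of $\gamma$; the second variation produces a stability operator $L$ on $\gamma$ that is nonnegative. Taking the positive first eigenfunction $h$ of $L$ yields a pointwise differential inequality along $\gamma$ involving $(\log h)''$, $(\log v)''$ and the squares $((\log h)')^2$, $((\log v)')^2$. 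Multiplying by $\psi^2$ on $[0,l]$ and applying Cauchy--Schwarz with the optimal constant $A=\dfrac{4-2\mu}{4-\mu}$ absorbs those squares, leaving $\mu\lambda_1-(2\mu-1)\kappa$ against $A^{-1}\int(\psi')^2$; the Dirichlet eigenvalue $\pi^2/l^2\le\pi^2/D^2$ finishes. This curve-stability reduction is exactly the Schoen--Yau mechanism of \cite{schoen1983}.

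Your two-dimensional integration cannot recover these constants. After Cauchy--Schwarz you must control $\int_\Sigma\phi^2|\nabla v|^2$, and your proposed device---the $\phi\equiv1$ identity together with $\int\phi^2|\nabla v|^2\le(\max\phi^2)\int|\nabla v|^2$---is circular: it reintroduces $\lambda_1\int_\Sigma v^2$, with a different weight than the $\lambda_1\int_\Sigma\phi^2v^2$ you are trying to bound, so nothing cancels. At $\mu=1/2$ the $\phi\equiv1$ identity reads $\tfrac12\lambda_1\int v^2=\int\kappa v^2$ and contains no $|\nabla v|^2$ term at all, so there is nothing to substitute. The splitting $2\mu\kappa=(2\mu-1)\kappa+\kappa$ does not help either, since the leftover $\int\kappa\,\phi^2v^2$ has no reason to be nonpositive. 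Finally, with $\phi=\psi(d(\cdot,p))$ the ratio $\int v^2|\nabla\phi|^2\big/\int v^2\phi^2$ is a $v^2$-\emph{weighted} Rayleigh quotient on $\Sigma$, not the one-dimensional Dirichlet quotient on $[0,D]$, and is not bounded by $\pi^2/D^2$ in general. The missing idea is precisely the reduction to a curve via the stability of a weighted-length minimizer, which simultaneously controls the normal derivative of $v$ and supplies the second positive function $h$ needed for the absorption step.
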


\begin{proof}
Let $q>0$ be the first eigenfunction of the operator $-\Delta+2\kappa$, so we have
\[-\Delta q+2\kappa q=\lambda_1q.\]

Fix two points on $\Sigma$, for any curve $\gamma$ joining them, consider the functional $\int_\gamma v$, where $v$ is a fixed positive function on $\Sigma$. Let $\gamma$ be a minimizer of this functional, which always exists because we can view it as a geodesic connecting the two given points under a conformally changed metric. Denote by $s$ the arc length parameter of $\gamma$. Let $\eta=\varphi\cdot n$ be a normal variational vector field along $\gamma$, where $n$ is a fixed unit normal vector field of $\gamma$ and $\varphi$ is a smooth function on $\gamma$ vanishing at two end points. Denote by $\tau$ the geodesic curvature of $\gamma$, we have
\[\nabla_{\frac{\p}{\p s}}n=\tau\frac{\p}{\p s}.\]
From the vanishing of the first variation, we get
\[v_n+v\tau=0,\]
where $v_n$ is the normal derivative of $v$ along $\gamma$. Furthermore, the second variation gives us
\[\int_\gamma\left(-v\varphi\varphi''+\varphi^2\left(\frac{1}{2}v''-v\kappa-2v\tau^2+\nabla^2v(n,n)\right)\right)\ud s\geq0\]
for any test function $\varphi$. Notice that
\[\nabla^2v\left(\frac{\p}{\p s},\frac{\p}{\p s}\right)=v''+\tau v_n=v''-\tau^2v,\]
therefore we can rewrite the above inequality as
\[\int_\gamma\left(-v\varphi\varphi''+\varphi^2\left(\Delta v-\frac{1}{2}v''-\tau^2v-\kappa v\right)\right)\ud s\geq0.\]
Let $L$ be the operator given by
\[L\varphi=-v\varphi''+\varphi(\Delta v-\frac{1}{2}v''-\tau^2v-\kappa v),\]
then $L$ is nonnegative. Let $h>0$ be the first eigenfunction of $L$, hence we have
\[-vh''+h(\Delta v-\frac{1}{2}v''-\tau^2v-\kappa v)\geq 0,\]
or equivalently
\[vh''+\frac{1}{2}hv''\leq h(\Delta v-\tau^2v-\kappa v).\]
Now take $v=q^\mu$ for some $0<\mu<2$. We have
\[\Delta v=\Delta(q^\mu)=\mu q^{\mu-1}\Delta q+\mu(\mu-1)q^{\mu-2}|\nabla q|^2=\mu(2\kappa-\lambda_1)v+\mu(\mu-1)q^{\mu-2}|\nabla q|^2,\]
therefore
\[vh''+\frac{1}{2}hv''\leq hv(\kappa(2\mu-1)-\mu\lambda_1-\tau^2)+\mu(\mu-1)q^{\mu-2}|\nabla q|^2h.\]
Notice that
\[|\nabla q|^2=(q')^2+(q_n)^2\]
and we have
\[q_n=\frac{-\tau}{\mu}q,\quad\quad q'=\frac{v'q}{\mu v},\]
hence
\[vh''+\frac{1}{2}hv''\leq hv\left(\kappa(2\mu-1)-\mu\lambda_1-\frac{1}{\mu}\tau^2\right)+\frac{\mu-1}{\mu}\frac{(v')^2}{v}h.\]
Since $\mu>0$, we have
\[h^{-1}h''+\frac{1}{2}v^{-1}v''\leq \kappa(2\mu-1)-\mu\lambda_1+\frac{\mu-1}{\mu}\big((\log v)'\big)^2.\]
By direct calculation, we know
\[h^{-1}h''+\frac{1}{2}v^{-1}v''=(\log h+\frac{1}{2}\log v)''+((\log h)')^2+\frac{1}{2}((\log v)')^2,\]
so we get
\[\big((\log h)'\big)^2+\left(\frac{1}{\mu}-\frac{1}{2}\right)\big((\log v)'\big)^2+\mu\lambda_1-(2\mu-1)\kappa\leq -\Big(\log h+\frac{1}{2}\log v\Big)''.\]
Let $\psi$ be any test function on $\gamma$. Multiplying the above inequality by $\psi^2$ and integration by part, we get
\[\begin{split}&\int_\gamma\psi^2\left[((\log h)')^2+\left(\frac{1}{\mu}-\frac{1}{2}\right)((\log v)')^2\right]\ud s+\int_\gamma\psi^2(\mu\lambda_1-(2\mu-1)\kappa)\ud s\\
\leq& \int_\gamma 2\psi\psi'\Big((\log h)'+\frac{1}{2}(\log v)'\Big)\ud s\\ \leq &A\int_\gamma\psi^2\Big((\log h)'+\frac{1}{2}(\log v)'\Big)^2\ud s+A^{-1}\int_\gamma(\psi')^2\ud s.\end{split}\]
As $\mu<2$, we may choose a suitable $A$ such that
\[A\Big((\log h)'+\frac{1}{2}(\log v)'\Big)^2\leq ((\log h)')^2+\left(\frac{1}{\mu}-\frac{1}{2}\right)((\log v)')^2.\]
The best constant $A$ is given by $\dfrac{4-2\mu}{4-\mu}$.
So we get
\[(\mu\lambda_1-\max\left((2\mu-1)\kappa\right))\int_\gamma \psi^2\ud s\leq \frac{4-\mu}{4-2\mu}\int_\gamma(\psi')^2\ud s\]
for any test function $\psi$.

Suppose $\gamma$ has length $l$, then the first eigenvalue of $-\dfrac{\ud^2}{\ud s^2}$ on $\gamma$ is $\dfrac{\pi^2}{l^2}$, so we get
\[\lambda_1\leq\max\left(\frac{2\mu-1}{\mu}\kappa\right)+\frac{4-\mu}{\mu(4-2\mu)}\frac{\pi^2}{l^2}.\]
Because we have the freedom to choose the two endpoints of $\gamma$, we get the desired estimate.
\end{proof}
\begin{rmk}~\\
As far as the authors can find in the literature, upper bounds of $\lambda_1$ (e.g. \cite{grigoryan2004}) typically take the form
\[\lambda_1\leq \frac{4\pi\chi}{\textrm{Area}},\] where $\chi$ is the Euler characteristics of $\Sigma$. Basically one can feed in the constant function to the Rayleigh quotient to derive this estimate.

If $\Sigma$ is topologically a sphere, our estimate (\ref{maines}) is in some sense better than the ones in the literature. Because the isoperimetric inequality (e.g. \cite{shioya2015}) bounds the area from above by a constant times diameter square but not the other way around.

If $\Sigma$ is a negatively-curved closed surface, it seems possible to produce a negative upper bound for $\lambda_1$ by putting $\mu>1/2$ in (\ref{esti}). However we are unable to compare our result with the ones in literature.

Our method certainly applies to other more general Schr\"odinger operators on closed surfaces. We used $-\Delta+2\kappa$ as our example because it is intrinsically defined and is related to the stability of minimal surfaces as we stated at the beginning of this note. We are also motivated by our study of Strominger system \cite{fei2017} where spectral properties of this operator play an central role.
\end{rmk}

\begin{rmk}~\\
	Let $\Sigma \hookrightarrow S^3(1)$ be a surface of constant mean curvature $H$ with second fundamental form $A$ and principal curvatures $\kappa_1, \kappa_2$, then by Gauss equation, we have $2 \kappa= 2 +4H^2 - |A|^2 = 2  +2 \kappa_1 \kappa_2 $.  The associated Jacobi operator is
	$$ J  = -\Delta  - (|A|^2 + 2) = -\Delta + 2\kappa - 4 - 4H^2 $$
	The first eigenvalue of $J$ is related to $\lambda_1$ by $\lambda_1^J = \lambda_1  - 4 - 4H^2$.
	Simons \cite{simons1968} proved that if $H=0$, then $\lambda_1^J \leq -n$. Al\'ias, Barros and Brasil \cite{alias2005} proved that either $\lambda_1^J = -n(1+H^2)$ and $\Sigma$ is totally umbilical or
$$\lambda_1^J \leq -2n(1+H^2)  + \frac{n(n-2)|H|}{\sqrt{n(n-1) }}\max \sqrt{|A|^2 - nH^2}.$$
In particular, when $n=2$, their result reduces to either $\lambda_1^J = -2(1+H^2)$ when $\Sigma$ is totally umbilical or $\lambda_1^J \leq -4(1+H^2)$. Hence, their result implies that either $\lambda_1 =2 + 2H^2$ if $\Sigma$ is umbilical or $\lambda_1 \leq 0$ if $\Sigma$ is not umbilical. For the umbilical case, $\Sigma$ has to be a sphere with radius $1/(1+H^2)$. In this case, $\kappa = 1 + H^2$ and $\lambda_1 = 2\kappa = 2+ 2H^2.$ For the umbilical case, the genus of $\Sigma$ is greater or equal to 1, then we have that the first eigenvalue of $-\Delta + 2\kappa $ is non-positive. It is an easy consequence of Gauss-Bonnet theorem if we take the constant function as test function. Our result here is somehow more general than theirs. The estimate here is intrinsic, which does not depend on the ambient space.
\end{rmk}

\noindent\textbf{Acknowledgements} The authors would like to thank Prof. D.H. Phong and Prof. S.-T. Yau for their constant encouragement and help. The authors are also indebted to inspiring discussions with S. Picard and M.-T. Wang.

\bibliographystyle{plain}

\bibliography{C:/Users/Piojo/Dropbox/Documents/Source}

\bigskip

\noindent Department of Mathematics, Columbia University, New York, NY 10027, USA\\
tfei@math.columbia.edu, zjhuang@math.columbia.edu,

\end{document}